\newcommand{\mathsym}[1]{{}}
\newcommand{\unicode}[1]{{}}
\theoremstyle{plain}
\newtheorem{theorem}{Theorem}
\newtheorem{corollary}[theorem]{Corollary}
\newtheorem{proposition}[theorem]{Proposition}
\theoremstyle{definition}
\theoremstyle{remark}
\newtheorem{remark}[theorem]{Remark}
\newcommand{\Z}{\mathbb Z}
\newcommand{\R}{\mathbb R}
\renewcommand{\leq}{\leqslant}
\renewcommand{\geq}{\geqslant}
\begin{document}

\begin{center}
{\bfseries\Large Parametrising correlation matrices}
\\[2\baselineskip]
P. J. Forrester\footnote{pjforr@unimelb.edu.au} and Jiyuan Zhang\footnote{jiyuanz@student.unimelb.edu.au}%

{\itshape ARC Centre of Excellence for Mathematical and Statistical Frontiers,\\
School of Mathematics and Statistics, The University of Melbourne, Victoria 3010, Australia.}\\

\end{center}

\begin{abstract}
\noindent
Correlation matrices are the sub-class of positive definite real matrices with all entries on the diagonal equal to unity. Earlier work has exhibited
a parametrisation of the corresponding Cholesky factorisation in terms of partial correlations, and also in terms of hyperspherical co-ordinates.
We show how the two are relating, starting from the definition of the partial correlations in terms of the Schur complement. We extend this to the
generalisation of correlation matrices to the cases of complex and quaternion entries. As in the real case, we show how the hyperspherical
parametrisation leads naturally to a distribution on the space of correlation matrices $\{R\}$ with probability density function proportional to
$( \det R)^a$. For certain $a$, a construction of random correlation matrices realising this distribution is given in terms of rectangular standard
Gaussian matrices.
\end{abstract}

\section{Introduction}\label{s1}

In applications of matrices, there are many settings in which the rows and columns have distinct meaning. For example, in a survey of $n$ people, giving a numerical score between $0$ and $5$ for their rating of $N$ different movies, there is an $n\times N$ matrix $X$---the data matrix---such that the rows correspond to the people and the columns to the movies. The $k$-th column $\mathbf X^{(k)}$ is then the vector of scores given for movie $k$, and its $j$-th entry is the score given by person $j$. Let $\mu_k$ denote the average of the scores in column $k$, and let $\mathbf{1}_n$ denote the $n\times 1$ vector with all entries equal to 1. The recentred, zero mean score vectors are then specified as $\mathbf Y^{(k)}=\mathbf X^{(k)}-\mu_k\mathbf 1_n$, and the recentred data matrix is
\begin{equation}\label{0}
Y=\begin{bmatrix}\mathbf Y^{(1)}&\mathbf Y^{(2)}&\cdots&\mathbf Y^{(N)}\end{bmatrix}
\end{equation}

The sample covariance matrix $S$ is specified in terms of $Y$ as
\begin{equation}\label{1}
S=\frac{1}{n-1}Y^\top Y.
\end{equation}
Note that $S$ is a $N\times N$ symmetric matrix, and its entry in rows $j$ and column $k$ gives the sample covariance between the scores of movies $j$ and $k$. In the case that the rows and/or columns of $Y$ are drawn from a vector Gaussian distribution with given  covariance, \eqref{1} is referred to a Wishart matrix. For such random matrices, a vast number of theoretical results have been assembled, and applied settings identified, since the pioneering paper \cite{Wi28};
see e.g.~\cite{Mu82}.

Natural from the viewpoint of data analysis is to further refine \eqref{1} by forming the sample correlation matrix
\begin{equation}\label{2}
R=\begin{bmatrix}
\displaystyle\frac{\left(\mathbf Y^{(j)}\right)^\top \mathbf Y^{(k)}}{\left\|\mathbf Y^{(j)}\right\|\,\left\| \mathbf Y^{(k)}\right\|}
\end{bmatrix}_{j,k=1}^N=:\begin{bmatrix}\rho_{jk}
\end{bmatrix}_{j,k=1}^N.
\end{equation}
Here, as well as the original score vectors being centred by subtracting their mean, each has been scaled to correspond to a unit vector. One sees immediately that the entries of $R$ are all equal to unity on the diagonal, while on the off diagonal, in accordance with the Cauchy-Schwarz inequality, they all have modulus less than or equal to 1. Moreover, the decomposition
\begin{equation}\label{3}
R=\mathbf D^\top Y^\top Y \mathbf D,
\end{equation}
where
\begin{equation*}
\mathbf D=\begin{bmatrix}
\displaystyle\frac{1}{\left\| \mathbf Y^{(j)}\right\|}
\end{bmatrix}_{j=1}^N
\end{equation*}
shows that $R$, like $S$, is positive definite but now with bounded entries $|\rho_{jk}|\leq 1$.

This latter feature, although making some aspects of theoretical analysis more difficult (e.g.~studies of the eigenvalues) allows for a distinct set of questions to be posed. For example, in the case of correlation matrices the volume of the natural embedding in $\R^{N(N-1)}$---referred to as an elliptope \cite{LP95} and to be denoted $\mathcal R_N$---is well defined. Knowledge of this volume allows an answer to the question: if the strictly upper triangular entries of \eqref{2} are chosen uniformly at random in the range $(-1,1)$, what is the probability that $R$ is a valid correlation matrix (i.e.~is positive definite) \cite{EHNS16}?

A direct approach to this question requires a parametrisation of the space of correlation matrices. Two such parametrisations are available in the literature, both applying to the lower triangular matrix $L$ in the Cholesky factorisation
\begin{equation}\label{4}
R=LL^\top.
\end{equation}
One of these use hyperspherical co-ordinates in $\R^j$ to parametrise row $j$ ($j=1,\cdots ,N$) \cite{PB96,RJ00,RBM07,PW15,EHNS16}, and the other makes use of a sequence of partial correlations \cite{Jo06,LKJ09,Hu12}. The latter method yielded the first direct computation of the volume \cite{Jo06}
\begin{equation}\label{5}
\mathrm{vol}\left(\mathcal R_N\right)=\prod_{j=2}^N 2^{(j-1)^2}\left(B(j/2,j/2)\right)^{j-1}
\end{equation}
where
\begin{equation}\label{5.1}
B(a,b)=\frac{\Gamma(a)\Gamma(b)}{\Gamma(a+b)}.
\end{equation}
It is only in the last few years that this same formula (in equivalent forms) was derived using the hyperspherical parametrisation \cite{PW15,EHNS16}.

Indirect computations of $\mathrm{vol}\left(\mathcal R_N\right)$ are also possible. Such a method, giving a formula equivalent to \eqref{5} actually predates the work \cite{Jo06}---this is due to Wong et al. \cite{WCK03}. As implied by a comment in \cite[2nd paragraph of Introduction]{PW15} the same result, again deduced indirectly, follows from the still earlier work of Muirhead \cite[p.148]{Mu82}. 

The circumstances just described suggest a number of follow up problems. The most immediate is to relate the hyperspherical and partial correlation parametrisations. To give a satisfactory account on this point, a self contained theory relating to the latter must be developed. Moreover, the hyperspherical parametrisation gives a different viewpoint on known results \cite{Mu82} for the marginal distribution of the elements of \eqref{2}, when chosen uniformly at random, and similarly for the moments of $\det R$.

The literature cited above is restricted to the case of real entries. Complex valued covariance matrices, and thus complex valued correlation matrices, are well motivated from the viewpoint of their application in wireless communication; see for example \cite{TV04}. Thus, in addition to addressing the above problems when the correlation matrices have real entries, we consider too the case of complex (and quaternion) entries.

\setcounter{equation}{0}
\section{Cholesky factorisation and parametrisations}\label{S2}
Let $R$ be an $N\times N$ positive definite matrix with all diagonal entries equal to unity, as is consistent with \eqref{2}. Introduce the Cholesky factorisation \eqref{4} with
\begin{equation}\label{2.1}
L=
\begin{bmatrix}
l_{11}&0&0&\cdots&0\\
l_{21}&l_{22}&0&\cdots&0\\
l_{31}&l_{32}&l_{33}&\cdots&0\\
\vdots&\vdots&\vdots&\ddots&\vdots&\\
l_{N1}&l_{N2}&l_{N3}&\cdots& l_{NN}
\end{bmatrix}.
\end{equation}
Well established theory (see e.g. \cite{GvL83}) gives that this is unique for $R$ positive definite subject to the requirement that
\begin{equation}\label{2.2}
l_{jj}>0,\ \ (j=1,\cdots, N).
\end{equation}

The fact that the diagonal entries in \eqref{2} are all equal to unity implies that the sum of the squares of the non- zero entries along each rows $j$ of $L$ is also unity,
\begin{equation}\label{2.2a}
\sum_{k=1}^j l_{jk}^2=1.
\end{equation}
Hence $(l_{j1},l_{j2},\cdots,l_{jj})$ is a point on the sphere $S_{j-1}$. As such it permits the hyperspherical parametrisation (see references noted below \eqref{4})
\begin{equation}\label{2.3}
l_{jk}=\begin{cases}
\cos\theta_{jk}\displaystyle\prod\limits_{p=1}^{k-1}\sin\theta_{jp}&(1\leq k\leq j-1)\\
\displaystyle\prod\limits_{p=1}^{j-1}\sin\theta_{jp}&(j=k),\\
\end{cases}
\end{equation}
where for $k=1$ the products are to be taken as equal to unity. The requirement \eqref{2.2} implies
\begin{equation}\label{2.4}
0<\theta_{j,k}<\pi\ \ \ \ (1\leq k < j \le N).
\end{equation}

Let us now turn our attention to the parametrisation of the entries in \eqref{2.1} using partial correlation coefficients. In the setting leading to the definition \eqref{2} one defines the partial correlation coefficients $\rho_{j,k|\{1,\cdots,p-1\}}$ ($p\le j, k\le N$) as the entries of the $(N-p+1)\times(N-p+1)$ matrix
\begin{equation*}
\begin{bmatrix}
\rho_{j,k|\{1,\cdots,p-1\}}
\end{bmatrix}_{j,k=p}^N=
\begin{bmatrix}
\displaystyle\frac{\left(\mathbf Y^{(j)}-P_{\{1,\cdots,p-1\}}^\perp\mathbf Y^{(j)}\right)\cdot\left(\mathbf Y^{(k)}-P_{\{1,\cdots,p-1\}}^\perp\mathbf Y^{(k)}\right)}{\left\|\mathbf Y^{(j)}-P_{\{1,\cdots,p-1\}}^\perp\mathbf Y^{(j)}\right\|\,\left\|\mathbf Y^{(k)}-P_{\{1,\cdots,p-1\}}^\perp\mathbf Y^{(k)}\right\|}
\end{bmatrix}_{j,k=p}^N
\end{equation*}
where $P_{\{1,\cdots,p-1\}}^\perp\mathbf x$ denotes the orthogonal projection of the  vector $\mathbf x$ onto the hyperplane spanned by $\mathbf Y^{(1)},\cdots,\mathbf Y^{(p-1)}$ \cite{An58}.

The partial correlation coefficients are intimately related to the Schur complement of $R$ \cite{Ou81}. In relation to the latter, partition $R$ according to
\begin{equation*}
R=\begin{bmatrix}
R_{11}&R_{12}\\R_{21}&R_{22}
\end{bmatrix},
\end{equation*}
where $R_{21}=R_{12}^\top$ is of size $(N-p+1)\times(p-1)$, $R_{11}$ is of size $(p-1)\times(p-1)$ and $R_{22}$ is of size $(N-p+1)\times(N-p+1)$. The Schur complement is the $(N-p+1)\times(N-p+1)$ matrix
\begin{equation}\label{2.5}
R/R_{11} :=\begin{bmatrix}
\left(R/R_{11}\right)_{j,k}
\end{bmatrix}_{j,k=p}^N :=R_{22}-R_{21}R_{11}^{-1}R_{12}.
\end{equation}
In terms of the Schur complement, the partial correlations can be written \cite{An58,Ou81}
\begin{equation}\label{2.6}
\rho_{jk|\{1,\cdots,p-1\}}=\frac{(R/R_{11})_{jk}}{(R/R_{11})_{jj}^{1/2}(R/R_{11})_{kk}^{1/2}}
\end{equation}

It is stated in \cite[$\S$3.3]{CJA10} that the elements in (\ref{2.1}) as required for the Cholesky factorisation \eqref{4} can be writen in terms of a subset of the partial correlations according to
\begin{align}\label{2.7}
l_{j1}&=\rho_{j1}& (j=1,\cdots,N)\nonumber\\
l_{jk}&=\rho_{jk|\{1,\cdots,k-1\}}\prod_{p=1}^{k-1}\sqrt{1-\rho_{jp|\{1,\cdots,p-1\}}^2}& (j=k+1,\cdots,N)\nonumber\\
l_{jj}&=1-\sum_{p=1}^{j-1}l_{jp}^2& (j=2,\cdots,N).
\end{align}
Comparison with the hyperspherical parametrisation \eqref{2.3} shows the simple relationship
\begin{equation}\label{2.8}
\rho_{jk|\{1,\cdots,k-1\}}=\cos\theta_{jk}, \quad 1\leq k< j\leq N.
\end{equation}

Following ideas \cite{DIP88} we will show how \eqref{2.8} can be derived in a way that is consistent with \eqref{2.6}.
With $Y$ given by \eqref{0}, this requires introducing the covariance matrix
\begin{equation}\label{2.9}
Y^\top Y=\Sigma=\begin{bmatrix}\sigma_{jk}
\end{bmatrix}=\begin{bmatrix}
\Sigma_{\mathbf p}&\Sigma_{\mathbf {pq}}\\
\Sigma_{\mathbf {qp}}&\Sigma_{\mathbf q}
\end{bmatrix}.
\end{equation}
Here $\mathbf p=\{1,\cdots,p-1\}$, $\mathbf q=\{p,\cdots,N\}$ and $\Sigma_{\mathbf{pq}}$ is the sub-block of $\Sigma$ formed from rows $\mathbf p$ and columns $\mathbf q$ ($\Sigma_{\mathbf p}$ is an abbreviation for $\Sigma_{\mathbf p\mathbf p}$). As is consistent with \eqref{2.5} the Schur complement of $\Sigma$ corresponding to the partition \eqref{2.9} is
\begin{equation}\label{2.10}
\Sigma/\Sigma_\mathbf{p}=\begin{bmatrix}
\sigma_{jk|\{1,\cdots,p-1\}}
\end{bmatrix}_{j,k=p}^N=\Sigma_{\mathbf q}-\Sigma_{\mathbf {qp}}\Sigma_{\mathbf p}^{-1}\Sigma_{\mathbf {pq}}.
\end{equation}
The quantities $\sigma_{jk|\{1,\cdots,p-1\}}$ are referred to as partial covariances.

As noted in \cite{DIP88}, \eqref{2.10} can be written in terms of a suitable block partitioning of the Cholesky factorisation of $\Sigma$,
\begin{equation}\label{2.11}
\Sigma=\begin{bmatrix}\sigma_{jk}\end{bmatrix}_{j,k=1}^N=AA^\top=\begin{bmatrix}
A_{\mathbf p}&0_\mathbf{pq}\\
A_{\mathbf {qp}}&A_{\mathbf q}
\end{bmatrix}\begin{bmatrix}
A_{\mathbf p}^\top&A_{\mathbf {qp}}^\top\\
0^\top_\mathbf{pq}&A_{\mathbf q}^\top
\end{bmatrix},
\end{equation}
where $A$ is a lower triangular matrix with entries on the diagonal nonnegative and $A_\mathbf p, A_{\mathbf{qp}}$ etc are sub-blocks as in the notation \eqref{2.9}. Thus one sees that
\begin{equation}\label{2.12}
\Sigma/\Sigma_\mathbf{p}=A_{\mathbf {qp}}A_{\mathbf {qp}}^\top+A_{\mathbf {q}}A_{\mathbf {q}}^\top-A_{\mathbf {q p}}A_{\mathbf {p}}^\top(A_{\mathbf {p}}A_{\mathbf {p}}^\top)^{-1}A_{\mathbf {p}}A_{\mathbf {q p}}^\top=A_{\mathbf {q}}A_{\mathbf {q}}^\top.
\end{equation}
Using this, a known recursive formula for the partial covariances can be re-derived in a structured, self contained way, which moreover relates to \eqref{2.8}.
\begin{proposition}
The partial covariances satisfy the recursive formula \cite{An58}
\begin{equation}\label{2.13}
\sigma_{jk|\{1,\cdots,p\}}=\sigma_{jk|\{1,\cdots,p-1\}}-\frac{\sigma_{jp|\{1,\cdots,p-1\}}\sigma_{pk|\{1,\cdots,p-1\}}}{\sigma_{pp|\{1,\cdots,p-1\}}}, 
\quad
p+1\leq j,k\leq N.
\end{equation}	
\end{proposition}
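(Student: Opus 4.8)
The plan is to leverage the Cholesky identity \eqref{2.12}, which expresses the partial covariance matrix as $\Sigma/\Sigma_\mathbf{p}=A_\mathbf{q}A_\mathbf{q}^\top$, and to exploit the nested structure of the Cholesky factor $A$ of $\Sigma$ under successive refinements of the partition. The essential point is that advancing the conditioning set from $\{1,\ldots,p-1\}$ to $\{1,\ldots,p\}$ corresponds to stripping off one further leading row and column of $A_\mathbf{q}$, so that \eqref{2.13} will emerge as a one-step Schur complement.

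First I would apply \eqref{2.12} a second time, now with the finer partition $\mathbf{p}'=\{1,\ldots,p\}$ and $\mathbf{q}'=\{p+1,\ldots,N\}$, both taken relative to the \emph{same} factor $A$ of \eqref{2.11}. This gives $\Sigma/\Sigma_{\mathbf{p}'}=A_{\mathbf{q}'}A_{\mathbf{q}'}^\top$, whence $\sigma_{jk|\{1,\ldots,p\}}=(A_{\mathbf{q}'}A_{\mathbf{q}'}^\top)_{jk}$ for $p+1\le j,k\le N$. Since $A$ is lower triangular, $A_{\mathbf{q}'}$ is literally the lower–right sub-block of $A_\mathbf{q}$, and I would record this nesting as
\begin{equation*}
A_\mathbf{q}=\begin{bmatrix} a_{pp}&0\\ \mathbf{a}&A_{\mathbf{q}'}\end{bmatrix},\qquad \mathbf{a}=\left(a_{jp}\right)_{j=p+1}^N,
\end{equation*}
with $a_{pp}>0$ the diagonal entry of $A$ indexed by $p$.

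Next I would expand $A_\mathbf{q}A_\mathbf{q}^\top$ block by block and match it against $\Sigma/\Sigma_\mathbf{p}=\bigl[\sigma_{jk|\{1,\ldots,p-1\}}\bigr]$. The leading entry gives $\sigma_{pp|\{1,\ldots,p-1\}}=a_{pp}^2$, the first column gives $\sigma_{jp|\{1,\ldots,p-1\}}=a_{pp}a_{jp}$, and the lower–right block gives $\sigma_{jk|\{1,\ldots,p-1\}}=a_{jp}a_{kp}+(A_{\mathbf{q}'}A_{\mathbf{q}'}^\top)_{jk}$ for $p+1\le j,k\le N$. Combining the last of these with $\sigma_{jk|\{1,\ldots,p\}}=(A_{\mathbf{q}'}A_{\mathbf{q}'}^\top)_{jk}$ already yields $\sigma_{jk|\{1,\ldots,p\}}=\sigma_{jk|\{1,\ldots,p-1\}}-a_{jp}a_{kp}$; eliminating the Cholesky entries through $a_{jp}a_{kp}=(a_{pp}a_{jp})(a_{pp}a_{kp})/a_{pp}^2$ and the symmetry $\sigma_{kp|\{1,\ldots,p-1\}}=\sigma_{pk|\{1,\ldots,p-1\}}$ then reproduces \eqref{2.13}.

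The only subtlety worth flagging—rather than a genuine obstacle—is ensuring that both invocations of \eqref{2.12} are anchored to the single factor $A$ of $\Sigma$; once that is arranged, the inclusion of $A_{\mathbf{q}'}$ inside $A_\mathbf{q}$ is automatic and no appeal to the uniqueness of the factorisation is required. Everything else is the routine block multiplication above, so the recursion follows without recourse to the projection definition of the $\sigma_{jk|\{1,\ldots,p-1\}}$ or to \eqref{2.6}.
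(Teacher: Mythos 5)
Your proposal is correct and follows essentially the same route as the paper's own proof: both apply \eqref{2.12} a second time with the conditioning set advanced to $\{1,\ldots,p\}$, exploit the nesting of the Cholesky factor by writing $A_{\mathbf q}$ as a lower-triangular $2\times 2$ block matrix whose lower-right block is the factor for the finer partition, expand the block product, and eliminate the entries of $A$ via $\sigma_{pp|\{1,\ldots,p-1\}}=A_{pp}^2$ and $\sigma_{jp|\{1,\ldots,p-1\}}=A_{pp}A_{jp}$. Your block notation $\begin{bmatrix} a_{pp}&0\\ \mathbf{a}&A_{\mathbf{q}'}\end{bmatrix}$ is precisely the paper's \eqref{2.14}--\eqref{2.15} in different dress, so no further comment is needed.
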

\begin{proof}
Extend \eqref{2.9} by writing
\begin{equation}
\Sigma=
\begin{bmatrix}
\Sigma_{\mathbf p}&\begin{bmatrix}\sigma_{jp}\end{bmatrix}_{j=1}^{p-1}&\Sigma_{\mathbf {p\tilde{\mathbf q}}}\\
\begin{bmatrix}\sigma_{pk}\end{bmatrix}_{k=1}^{p-1}&\sigma_{pp}&\begin{bmatrix}\sigma_{pk}\end{bmatrix}_{k=p+1}^{N}\\
\Sigma_{\mathbf {\tilde{\mathbf q}p}}&\begin{bmatrix}\sigma_{jp}\end{bmatrix}_{j=p+1}^{N}&\Sigma_{\tilde{\mathbf q}}
\end{bmatrix},
\end{equation}
where $\tilde{\mathbf q}=\{p+1,\cdots,N\}$. Similarly, in the Cholesky factorisation (\ref{2.11}), extend the block decomposition of the lower triangular matrix $A$ by writing
\begin{equation}\label{2.14}
A=
\begin{bmatrix}
A_{\mathbf p}&\begin{bmatrix}0\end{bmatrix}_{j=1}^{p-1}&0_{\mathbf {p\tilde{\mathbf q}}}\\
\begin{bmatrix}A_{pk}\end{bmatrix}_{k=1}^{p-1}&A_{pp}&\begin{bmatrix}0\end{bmatrix}_{k=p+1}^{N}\\
A_{\mathbf {\tilde{\mathbf q},p}}&A_{\mathbf {\tilde{\mathbf q}},\{p\}}&A_{\tilde{\mathbf q}}
\end{bmatrix}.
\end{equation}
Substituting the bottom $2\times 2$ block of \eqref{2.14} for $A_\mathbf q$ in \eqref{2.12} shows
\begin{equation}\label{2.15}
\Sigma/\Sigma_{\{1,\cdots,p-1\}}=\begin{bmatrix}
A_{pp}^2&A_{pp}A_{\tilde{\mathbf {q}},\{p\}}^\top\\
A_{pp}A_{\tilde{\mathbf {q}}, \{p\} }&A_{\tilde{\mathbf {q}},\{p\}}A_{\tilde{\mathbf {q}}, \{p\} }^\top+A_{\tilde{\mathbf {q}}}A_{\tilde{\mathbf {q}}}^\top\\
\end{bmatrix}.
\end{equation}
In relation to $A_{\tilde{\mathbf {q}}}A_{\tilde{\mathbf {q}}}^\top$ as appearing in this expression, we note from \eqref{2.12} with $\mathbf p$ replaced by $\{1,\cdots,p\}$ and $\mathbf q$ by $\tilde{\mathbf q}$, that
\begin{equation}\label{2.16}
\Sigma/\Sigma_{\{1,\cdots,p\}}=A_{\tilde{\mathbf {q}}}A_{\tilde{\mathbf {q}}}^\top.
\end{equation}
Recalling the notation in\eqref{2.10} for the partial covariances, we read off from \eqref{2.15} that
\begin{align}
\sigma_{jk|\{1,\cdots,p-1\}}&=A_{jp}A_{pk}+\left(A_{\tilde{\mathbf{q}}}A_{\tilde{\mathbf{q}}}^\top\right)_{jk}&(p+1\leq j,k\leq N)\label{2.17a}\\
\sigma_{jp|\{1,\cdots,p-1\}}&=A_{pp}\left(A_{\tilde{\mathbf{q}}, \{p\}}\right)_{jp}=A_{pp}A_{jp}&\label{2.17b}\\
\sigma_{pk|\{1,\cdots,p-1\}}&=A_{pp}\left(A_{\tilde{\mathbf{q}}, \{p\} }\right)_{pk}=A_{pp}A_{pk}&\label{2.17c}\\
\sigma_{pk|\{1,\cdots,p-1\}}&=A_{pp}^2.&\label{2.17d}
\end{align}
Similarly, we read off from \eqref{2.16} that
\begin{equation}\label{2.18}
\sigma_{jk|\{1,\cdots,p\}}=\left(A_{\tilde{\mathbf{q}}}A_{\tilde{\mathbf{q}}}^\top\right)_{jk}\ \ \ \ (p+1\leq j,k\leq N).
\end{equation}
Eliminating all dependence on the matrix $A$ and its entries in \eqref{2.17a}–\eqref{2.17d}, \eqref{2.18} gives \eqref{2.13}
\end{proof}

The recurrence \eqref{2.13}, obtained through different working described as 'tedious', is given in \cite[\S 2.5.3]{An58}. And as noted in this latter reference, it follows immediately from \eqref{2.13}, and the relation between partial correlations and partial covariances
\begin{equation}\label{2.19}
\rho_{jk|\{1,\cdots,p-1\}}:=\displaystyle\frac{\sigma_{jk|\{1,\cdots,p-1\}}}{ \displaystyle\sqrt{\sigma_{jj|\{1,\cdots,p-1\}}\sigma_{kk|\{1,\cdots,p-1\}}}},\ \ \ \ p\leq j,k\leq N
\end{equation}
(compare \eqref{2.6} and \eqref{2.10}), that there is a similar recurrence to \eqref{2.13} for the partial correlations.
\begin{corollary}
We have
\begin{equation*}
\rho_{jk|\{1,\cdots,p\}}=\frac{\rho_{jk|\{1,\cdots,p-1\}}-\rho_{jp|\{1,\cdots,p-1\}}\rho_{pk|\{1,\cdots,p-1\}}}{\sqrt{1-\rho_{jp|\{1,\cdots,p-1\}}^2}\sqrt{1-\rho_{pk|\{1,\cdots,p-1\}}^2}},\ \ \ \ p+1\leq j,k\leq N.
\end{equation*}
\end{corollary}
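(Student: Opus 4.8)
The plan is to combine the partial-covariance recurrence \eqref{2.13} just established with the defining relation \eqref{2.19} between partial correlations and partial covariances. Writing \eqref{2.19} at the level $\{1,\dots,p\}$ gives
\[
\rho_{jk|\{1,\dots,p\}}=\frac{\sigma_{jk|\{1,\dots,p\}}}{\sqrt{\sigma_{jj|\{1,\dots,p\}}\,\sigma_{kk|\{1,\dots,p\}}}},
\]
so the task reduces to re-expressing the three partial covariances $\sigma_{jk|\{1,\dots,p\}}$, $\sigma_{jj|\{1,\dots,p\}}$ and $\sigma_{kk|\{1,\dots,p\}}$ in terms of partial correlations at the previous level $\{1,\dots,p-1\}$, and checking that the normalising factors conspire to produce the stated denominator.

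First I would apply \eqref{2.13} to the numerator and divide through by $\sqrt{\sigma_{jj|\{1,\dots,p-1\}}\,\sigma_{kk|\{1,\dots,p-1\}}}$; recognising each resulting quotient as a level-$(p-1)$ partial correlation via \eqref{2.19} turns the numerator into $\rho_{jk|\{1,\dots,p-1\}}-\rho_{jp|\{1,\dots,p-1\}}\rho_{pk|\{1,\dots,p-1\}}$. The key observation is that \eqref{2.13} applies equally to the diagonal entries: taking $k=j$ gives
\[
\sigma_{jj|\{1,\dots,p\}}=\sigma_{jj|\{1,\dots,p-1\}}-\frac{\sigma_{jp|\{1,\dots,p-1\}}^2}{\sigma_{pp|\{1,\dots,p-1\}}}=\sigma_{jj|\{1,\dots,p-1\}}\bigl(1-\rho_{jp|\{1,\dots,p-1\}}^2\bigr),
\]
where the factorisation follows on extracting $\sigma_{jj|\{1,\dots,p-1\}}$ and invoking \eqref{2.19}, and the analogous identity for the $kk$ entry follows by symmetry.

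Substituting these factorisations under the square root makes the factor $\sqrt{\sigma_{jj|\{1,\dots,p-1\}}\,\sigma_{kk|\{1,\dots,p-1\}}}$ common to numerator and denominator, so it cancels and leaves precisely the claimed ratio with denominator $\sqrt{1-\rho_{jp|\{1,\dots,p-1\}}^2}\,\sqrt{1-\rho_{pk|\{1,\dots,p-1\}}^2}$. The computation is essentially routine algebra; the only point requiring care—and thus the main thing to get right—is the diagonal factorisation, where one must correctly identify $\sigma_{jp|\{1,\dots,p-1\}}^2/\bigl(\sigma_{pp|\{1,\dots,p-1\}}\sigma_{jj|\{1,\dots,p-1\}}\bigr)$ with $\rho_{jp|\{1,\dots,p-1\}}^2$. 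Since the partial covariances are the entries of a Schur complement of the positive definite matrix $\Sigma$, that complement is itself positive definite, so $\sigma_{jj|\{1,\dots,p-1\}}>0$ and $1-\rho_{jp|\{1,\dots,p-1\}}^2>0$, which guarantees that all the square roots are well defined and positive.
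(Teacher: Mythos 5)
Your proposal is correct and follows exactly the route the paper intends: the paper states that the corollary ``follows immediately'' from the partial-covariance recurrence \eqref{2.13} together with the normalisation \eqref{2.19}, and your computation simply fills in that routine algebra, including the essential diagonal factorisation $\sigma_{jj|\{1,\cdots,p\}}=\sigma_{jj|\{1,\cdots,p-1\}}\bigl(1-\rho_{jp|\{1,\cdots,p-1\}}^2\bigr)$. Nothing further is needed.
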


Most significant in relation to explaining the parametrisation \eqref{2.8} are the relations \eqref{2.17b} and \eqref{2.17c}. Thus substituting in \eqref{2.12}, recalling the first equality in \eqref{2.10} and making use too of \eqref{2.19} gives
\begin{equation}\label{2.20}
\rho_{jp|\{1,\cdots,p-1\}}=\frac{A_{jp}}{A_{jj}},\ \ \ \ p+1\leq j\leq N.
\end{equation}
Using now a hyperspherical parametrisation of the lower triangular matrix $A$ by setting $A_{jk}=r_{j}l_{jk}, (r_{j}>0, \, \sum_{j=1}^k A_{jk}^2 = r_j^2)$, with $l_{jk}$ given by \eqref{2.3}, we immediately obtain from \eqref{2.20} the parametrisation \eqref{2.8}.

\setcounter{equation}{0}
\section{The Jacobian, hyperspherical parametrisation of determinant and some consequences}
Let $|J_{\{\rho_{jk}\}\mapsto\{\theta_{jk}\}}|$ denote the Jacobian (absolute value of the determinant of the Jacobian matrix) for the change of variables $\{\rho_{jk}\}\mapsto\{\theta_{jk}\}$ as implied by \eqref{2}, \eqref{4} and \eqref{2.3}. It is shown in \cite{PW15} and \cite{EHNS16} that upon ordering the entries $\{\rho_{21},\rho_{31},\rho_{32},\cdots \}$ i.e.~reading sequentially along rows of the strictly lower triangular portion of $R$, and similarly ordering the angles, the Jacobian matrix is lower triangular. Its determinant and thus the Jacobian can be read off as equal to
\begin{equation}\label{3.1}
|J_{\{\rho_{jk}\}\mapsto\{\theta_{jk}\}}|=\prod_{k=1}^{N-1}\left(\prod_{j=k+1}^N \sin\theta_{jk}\right)^{N-k}.
\end{equation}
Note that this is strictly positive for the angles in the range \eqref{2.4}, and vanishes on the boundary of the range. Earlier, it was shown by Joe \cite{Jo06} (upon adjusting for the different convention by way of projected variables, the details of which were subsequently carried out in
\cite{Ku13}) that in terms of the partial correlations as appearing in \eqref{2.7}
\begin{equation}\label{3.2}
|J_{\{\rho_{jk}\}\mapsto\{\rho_{jk|\{1,\cdots,k-1\}}\}}|=\prod_{j=2}^{N}\prod_{k=1}^{j-1} \left(1-\rho^2_{jk|\{1,\cdots,k-1\}}\right)^{(N-k-1)/2}.
\end{equation}
The expressions \eqref{3.1} and \eqref{3.2} are seem to be consistent with \eqref{2.8}, upon noting that the latter implies
\begin{equation*}
\left|\frac{\partial\rho_{jk|\{1,\cdots,k-1\}}}{\partial\theta_{jk}}\right|=\sin\theta_{jk}.
\end{equation*}

We see from \eqref{3.1} that $|J_{\{\rho_{jk}\}\mapsto\{\theta_{jk}\}}|$ factorises with respect to the variables $\theta_{jk}$. The same is true for $\det R$. Thus, as follows from \eqref{4} and \eqref{2.3} we have \cite{PW15}
\begin{equation}\label{3.3}
\det R=(\det L)^2=\prod_{j=2}^{N}\prod_{p=1}^{j-1}\sin^2\theta_{jp}.
\end{equation}
Joe \cite{Jo06} had earlier shown that in terms of partial correlations
\begin{equation}
\det R=\prod_{j=2}^{N}\prod_{k=1}^{j-1}\left(1-\rho^2_{jk|\{1,\cdots,k-1\}}\right),
\end{equation}
as is consistent with \eqref{3.3} and \eqref{2.8}.

It follows from \eqref{3.2} and \eqref{3.3} that the choice of probability density function on the space of correlation matrices
\begin{equation}\label{3.4}
P(R)=\frac{1}{C_{a,N}}(\det R)^a,\ \ \ \ (a>-1),
\end{equation}
where $C_{a,N}$ denotes the normalisation, permits the hyperspherical parametrisation of the corresponding measure
\begin{equation}\label{3.5}
P(R)(\mathrm d R)=\frac{1}{C_{a,N}}\prod_{j=1}^{N-1}\left(\prod_{k=1}^j\left(\sin\theta_{N+1-k,N-j}\right)^{2a+j}\mathrm d\theta_{j+1,k}\right).
\end{equation}
The explicit formula \eqref{3.5} first appeared in \cite{PW15}, however an equivalent formula in terms of partial correlations can be found in \cite{Jo06}
(again, upon adjusting for the  the different convention by way of projected variables \cite{Ku13}).

A consequence of \eqref{3.5} is the evaluation of the normalisation
\begin{equation}\label{3.6}
C_{a,N}=\prod_{j=1}^{N-1}\left(\int_0^\pi\sin^{2a+j}\theta\mathrm d\theta\right)^j=\prod_{j=1}^{N-1}\left(B\left(a+\frac{j+1}{2},\frac{1}{2}\right)\right)^j,
\end{equation}
where $B(\alpha,\beta)$ is given by \eqref{5.1}. This is already known from \cite{Jo06,PW15}. In fact the probability density \eqref{3.4} on the space of correlation matrices, with $a=(n-N-1)/2$, ($n\leq N, n\in\Z^{+}$), and the evaluation of the normalisation \eqref{3.6} albeit written in a different form, first appeared in the work of Muirhead \cite[Th. 5.1.3]{Mu82}. It is shown there that it can be realised by choosing in \eqref{3} the matrix to be of size $n\times N$ with independent standard Gaussian entries.

Setting $a=0$, it follows that in the case of a uniform distribution
\begin{equation}\label{3.7}
\mathrm{vol}(\mathcal R_N)=\prod_{j=1}^{N-1}\left(B\left(\frac{j+1}{2},\frac{1}{2}\right)\right)^j.
\end{equation}
The working needed to show the equality between this form, which was first given in \cite{PW15}, and the form \eqref{5} as given in \cite{Jo06}, can be found in \cite{PW15}.

We read off from \eqref{3.5} that the marginal distribution of $\theta_{j,1}$, ($j\geq 2$) is proportional to $(\sin\theta_{j1})^{2a+N-1}$. Since for $k=1$, $\rho_{jk|\{1,\cdots,k-1 \}}=\rho_{j1}$, we can then make use of \eqref{2.8} to deduce the marginal distribution of any one $\rho_{jk}$, $(j>k)$ when $R$ has distribution \eqref{3.4}.
\begin{proposition}
In the above setting we have that the marginal distribution of a single non-diagonal element of $R$ has probability density function
\begin{equation}\label{3.8}
\frac{1}{B\left(2a+N-1,\frac{1}{2}\right)}(1-\rho^2)^{2(a-1)+N},\ \ \ \ |\rho|<1.
\end{equation}
\end{proposition}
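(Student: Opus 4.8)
The plan is to reduce the assertion, by a symmetry argument, to the single entry $\rho_{j1}$, and then to convert the already-established marginal law of the coordinate angle $\theta_{j1}$ into a law for $\rho_{j1}$ through a one-variable change of variables.

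First I would record that the distribution \eqref{3.4} is invariant under the map $R\mapsto PRP^{\top}$ for every $N\times N$ permutation matrix $P$: the factor $(\det R)^{a}$ is unchanged, the flat measure $\prod_{j>k}\mathrm{d}\rho_{jk}$ merely has its coordinates permuted, and the elliptope is carried onto itself. Hence the strictly lower-triangular entries of $R$ are exchangeable and share a common marginal, so it suffices to treat one of them. The entry $\rho_{j1}$ is the convenient choice because here, and only here, the partial correlation coincides with the ordinary correlation, so that \eqref{2.8} with $k=1$ reads $\rho_{j1}=\cos\theta_{j1}$; for a general entry $\rho_{jk}$ one first relabels the indices by a permutation sending $k$ to $1$ and invokes the invariance just noted.

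Next I would use the observation, recorded just above the statement, that the marginal density of $\theta_{j1}$ on $(0,\pi)$ is proportional to $(\sin\theta_{j1})^{2a+N-1}$; this follows once one checks that \eqref{3.5} is a genuine product over the angles, so that integrating out every angle other than $\theta_{j1}$ simply discards the corresponding factors. Substituting $\rho=\cos\theta_{j1}$, with $\mathrm{d}\rho=-\sin\theta_{j1}\,\mathrm{d}\theta_{j1}$ and hence $|\mathrm{d}\theta_{j1}/\mathrm{d}\rho|=(1-\rho^{2})^{-1/2}$, turns the weight $(\sin\theta_{j1})^{2a+N-1}=(1-\rho^{2})^{(2a+N-1)/2}$ into a density proportional to $(1-\rho^{2})^{(2a+N-2)/2}$ on $(-1,1)$. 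The lone normalisation is then fixed by the even Beta integral $\int_{-1}^{1}(1-\rho^{2})^{\gamma}\,\mathrm{d}\rho=B(\gamma+1,\tfrac12)$, with $B$ as in \eqref{5.1}.

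Since the computation is short, the points that actually demand care are conceptual rather than technical: justifying the reduction to $\rho_{j1}$ by permutation invariance (the ordinary correlation $\rho_{jk}$ with $k\geq2$ is \emph{not} the cosine of a single coordinate angle, so \eqref{2.8} does not apply to it directly), and reading off the exponent of $\theta_{j1}$ correctly from the permuted index labelling in \eqref{3.5}. I expect the one genuinely error-prone step to be matching the Jacobian factor $(1-\rho^{2})^{-1/2}$ against the power coming from the $\sin$-weight, where it is easy to misplace a factor of two in the final exponent.
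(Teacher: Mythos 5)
Your argument is correct and is essentially the paper's own: the sentences immediately preceding the proposition read off the marginal of $\theta_{j1}$ from \eqref{3.5} as proportional to $(\sin\theta_{j1})^{2a+N-1}$ and then invoke $\rho_{j1}=\cos\theta_{j1}$ from \eqref{2.8}; your only addition is to make explicit the permutation-invariance step needed to pass from $\rho_{j1}$ to a general $\rho_{jk}$, which the paper leaves implicit and which is genuinely required, since for $k\geq 2$ the angle $\theta_{jk}$ parametrises a partial, not an ordinary, correlation. The noteworthy point is that your computation does \emph{not} reproduce the formula printed in \eqref{3.8}: the substitution $\rho=\cos\theta$ turns $(\sin\theta)^{2a+N-1}\,\mathrm d\theta$ into $(1-\rho^2)^{(2a+N-2)/2}\,\mathrm d\rho$, so the exponent is $a+(N-2)/2$, exactly half of the printed $2(a-1)+N$, and the matching normalisation is $B\bigl(a+\tfrac{N}{2},\tfrac12\bigr)$ rather than $B\bigl(2a+N-1,\tfrac12\bigr)$. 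Your value is the correct one, on two independent checks. First, for $N=3$, $a=0$ the cross-section of the elliptope at fixed $\rho_{21}=x$ is the ellipse $\bigl(\tfrac{y-xz}{\sqrt{1-x^2}}\bigr)^2+z^2<1$ of area $\pi\sqrt{1-x^2}$, so the uniform marginal is proportional to $(1-x^2)^{1/2}$, in agreement with $a+(N-2)/2=\tfrac12$ but not with $2(a-1)+N=1$. Second, with $a=(n-N-1)/2$ your exponent equals $(n-3)/2$, the classical null density of a sample correlation coefficient cited in the remark following the proposition, whereas the printed exponent would give $n-3$. So the step you flagged as the error-prone one---retaining the factor $\tfrac12$ when converting the $\sin$-weight---is precisely where the published formula appears to have gone astray; your proof is sound as written.
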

\begin{remark}
With $a=(n-N-1)/2$, $(n\geq N, n\in\Z^+)$ this result can be found in \cite[\S 5.1 eq.(5)]{Mu82}
\end{remark}
\begin{proposition}
Let $R$ have distribution \eqref{3.4}. We have
\begin{equation}\label{3.9}
\mathbb E(\det R)^s=\prod_{j=1}^{N-1}\left(\frac{B(a+s+(j+1)/2),1/2)}{B(a+(j+1)/2),1/2)}\right)^j
\end{equation}
and
\begin{equation}\label{3.10}
\mathbb E(\log\det R)=\sum_{j=1}^{N-1}j\left(\Psi\left(a+\frac{j+1}{2}\right)-\Psi\left(a+1+\frac{j}{2}\right)\right)
\end{equation}
where $\Psi(x)$ denotes the digamma function.
\end{proposition}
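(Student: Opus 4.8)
The plan is to deduce both formulas from the single observation that averaging $(\det R)^s$ against the density \eqref{3.4} merely shifts the parameter $a$ to $a+s$. Since \eqref{3.4} is normalised so that $C_{a,N}=\int_{\mathcal R_N}(\det R)^a\,(\mathrm d R)$, for every $s$ with $a+s>-1$ one has immediately
\begin{equation*}
\mathbb E(\det R)^s=\frac{1}{C_{a,N}}\int_{\mathcal R_N}(\det R)^{a+s}\,(\mathrm d R)=\frac{C_{a+s,N}}{C_{a,N}}.
\end{equation*}
The same shift is transparent from the hyperspherical measure: by \eqref{3.3} we have $(\det R)^s=\prod_{j,p}(\sin\theta_{jp})^{2s}$, so multiplying \eqref{3.5} by this factor raises each exponent $2a+j$ to $2(a+s)+j$, which is exactly the measure with $a$ replaced by $a+s$. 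Inserting the closed form \eqref{3.6} for $C_{a,N}$ and $C_{a+s,N}$ and cancelling the common Beta factors then gives \eqref{3.9}.

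For \eqref{3.10} I would differentiate the moment $\mathbb E(\det R)^s$ with respect to $s$ at $s=0$, using $\frac{d}{ds}(\det R)^s\big|_{s=0}=\log\det R$. Setting $f(s):=\mathbb E(\det R)^s$, so that $f(0)=1$, this gives $\mathbb E(\log\det R)=f'(0)=\frac{d}{ds}\log f(s)\big|_{s=0}$. From \eqref{3.9},
\begin{equation*}
\log f(s)=\sum_{j=1}^{N-1}j\Big(\log B\big(a+s+\tfrac{j+1}{2},\tfrac12\big)-\log B\big(a+\tfrac{j+1}{2},\tfrac12\big)\Big),
\end{equation*}
and $B(\alpha,\beta)=\Gamma(\alpha)\Gamma(\beta)/\Gamma(\alpha+\beta)$ from \eqref{5.1} yields $\frac{d}{d\alpha}\log B(\alpha,\tfrac12)=\Psi(\alpha)-\Psi(\alpha+\tfrac12)$. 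Evaluating at $\alpha=a+\tfrac{j+1}{2}$, where $\alpha+\tfrac12=a+1+\tfrac{j}{2}$, and summing over $j$ reproduces \eqref{3.10}.

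The one point needing justification is the interchange of differentiation and expectation in passing from $f'(0)$ to $\mathbb E(\log\det R)$, together with the validity of \eqref{3.9} near $s=0$. Because $a>-1$ is assumed in \eqref{3.4}, there is an open interval about $s=0$ on which $a+s>-1$, so that $C_{a+s,N}$ is finite and, by \eqref{3.6}, a smooth (indeed real-analytic) function of $s$ there; this regularity of $f$ legitimises differentiating under the integral sign via dominated convergence, and constitutes the main, though routine, technical obstacle.
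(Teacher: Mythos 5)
Your argument is correct and follows essentially the same route as the paper: the moment $\mathbb E(\det R)^s$ is obtained by observing that multiplying the hyperspherical density \eqref{3.5} by $(\det R)^s$ (via \eqref{3.3}) shifts $a$ to $a+s$, so the answer is the ratio $C_{a+s,N}/C_{a,N}$ of beta-function products from \eqref{3.6}, and \eqref{3.10} then follows by differentiating at $s=0$. Your added remarks on the validity of \eqref{3.9} near $s=0$ and the interchange of limits are a harmless elaboration of what the paper leaves implicit.
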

\begin{proof}
The formula \eqref{3.10} is deduced from \eqref{3.9} by differentiating with respect to $s$, and setting $s=0$. The formula \eqref{3.9} is immediate from \eqref{3.3}, \eqref{3.5} and the trigonometric form of the beta function
\begin{equation}
2\int_{0}^{\pi/2}\sin^{2a}\theta\cos^{2b}\theta\mathrm d\theta=B(a,b).
\end{equation}
\end{proof}
\begin{remark}
With $a=(n-N-1)/2$, $(n\geq N, n\in\Z^+)$ a result equivalent to \eqref{3.9} can be found in \cite[\S 5.1 eq.(9)]{Mu82}. It is note in this reference that the result \eqref{3.9} implies that the limiting $a\rightarrow\infty$ distribution of $-2a\log\det R$ is equal to $\chi^2_{N(N-1)/2}$.
\end{remark}

\section{Random correlation matrices with complex or quaternion entries}

A correlation matrix can be constructed out of a data matrix $Y$ with complex entries by replacing $Y^\top Y$ in \eqref{3} by $Y^\top Y$. As mentioned in settings in wireless communications engineering . Of less practical interest, but still of theoretical relevance within random matrix theory (see e.g. \cite{Fo10}) is to form correlation matrices out of data matrices with entries having $2\times 2$ block structure
\begin{equation}\label{4.1}
\begin{bmatrix}
z&w\\-\overline{w}&\overline{z}
\end{bmatrix}.
\end{equation}
Such $2\times 2$ matrices form a representation of quaternions, allowing matrices with quaternion entries to be written as certain structured complex matrices of even size.

The theory relating to the hyperspherical parametrisation of the Cholesky factorisation and its implication for distributions on the space of correlation matrices, as presented in the previous two sections, can readily be extended to the complex and quaternion cases. To begin, augment the notation for the elements in \eqref{2.1} by writing $l_{jk}=l_{jk}^{(F)}$, where $(F)=r,c,q$ for the case of real, complex, quaternion entries respectively. In the complex case, one possible choice, which in fact occurs in the parametrisation of unitary matrices using Euler angles (see e.g. \cite{DF17}), is
\begin{equation*}
l_{jk}^{(c)}=\begin{cases}
l_{jj}^{r}&(j=k)\\
e^{i\psi_{jk}}l_{jk}^{(r)}&(2\leq k\leq j-1),
\end{cases}
\end{equation*}
where $0<\psi_{jk}<2\pi$. However in this parametrisation the Jacobian matrix for the change of variables from $\{\mathrm{Re}(l_{jk}^{(c)}),\mathrm{Im}(l_{jk}^{(c)})\}$ to $\{\psi_{jk},\theta_{jk}\}_{k=1,\cdots,j}$ is not triangular, making the calculations more difficult than need be.

Instead, we write $l_{jk}^{(c)}=l_{jk}^{(c),r}+il_{jk}^{(c),i}$ with $\mathrm{Re}(l_{jk}^{(c)})=l_{jk}^{(c),r}$, $\mathrm{Im}(l_{jk}^{(c)})=l_{jk}^{(c),i}$. Keeping in mind that $l_{jj}^{(c)}$ is required to be real, the analogue of \eqref{2.2a} reads
\begin{equation*}
\left(l_{jj}^{(c),r}\right)^2+\sum_{k=1}^{j-1}\left(\left(l_{jk}^{(c),r}\right)^2+\left(l_{jk}^{(c),r}\right)^2\right)=1.
\end{equation*}
This suggests we view $(l_{j1}^{(c),r},l_{j1}^{(c),i},\cdots,l_{j,j-1}^{(c),r},l_{j,j-1}^{(c),i},l_{j,j}^{(c),r})$ as a point on the real sphere $S_{2j-1}$ and so introduce  the parametrisation
\begin{equation}\label{4.2}
l_{jk}^{(c)}=\begin{cases}
(\cos\theta_{j,2k-1}+i\cos\theta_{j,2k}\sin\theta_{j,2k-1})\prod_{p=1}^{2k-2}\sin\theta_{jp},&(1\leq k\leq j-1)\\
\prod_{p=1}^{2j-2}\sin\theta_{jp},&(j=k).\\
\end{cases}
\end{equation}

In the quaternion case the $j$-th row of $L$ in \eqref{4} consists of $2\times 2$ blocks of the form \eqref{4.1}, each block to be denoted $l_{jk}^{(q)}$, ($k=1,\cdots,j$). The block $l_{jj}^{(q)}$ must represent a real number and this it is required $w=0$ and $z$ be real. For this block we set $z=l_{jj}^{(q),1}$. For the other blocks there are four real numbers corresponding to the real and imaginary parts of $z$ and $w$, which we denote $l_{jk}^{(q),s}$, ($s=1,\cdots,4$). In this setting the analogue of \eqref{2.2a} reads
\begin{equation*}
\left(l_{jj}^{(q),r}\right)^2+\sum_{s=1}^4\sum_{k=1}^{j-1}\left(l_{jk}^{(q),r}\right)^2=1.
\end{equation*}
suggesting that we view
\begin{equation}
(l_{j1}^{(q),1},l_{j1}^{(q),2},l_{j1}^{(q),3},l_{j1}^{(q),4},\cdots,l_{j,j-1}^{(q),1},l_{j,j-1}^{(q),2},l_{j,j-1}^{(q),3},l_{j,j-1}^{(q),4},l_{j,j}^{(q),1})
\end{equation}
as a point on the real sphere $S_{4j-3}$. The corresponding hyperspherical parametrisation is
\begin{align}\label{4.3}
l_{jk}^{(q),s}&=\cos\theta_{j,4(k-1)+s}\left(\prod_{l=1}^{s-1}\sin\theta_{j,4(k-1)+l}\right)\prod_{p=1}^{4(k-1)}\sin\theta_{jp} \: \: (2\leq k\leq j-1; 1 \le s \le 4)\nonumber\\
l_{jj}^{(q),1}&=\prod_{p=1}^{4(j-1)}\sin\theta_{jp}
\end{align}
with the (usual) convention that the products equal unity if they are empty.

Being effectively hyperspherical parametrisations of real spheres, we can write down the corresponding transformation in the volume forms associated with row $j$ in the coordinates \eqref{4.2} and \eqref{4.3}. Thus these will involve the usual Jacobian in such a setting (see e.g. \cite[Th.2.1.3]{Mu82}). In the complex case
\begin{equation}\label{4.4}
\left(\prod_{k=1}^{j-1}\mathrm dl_{jk}^{(c),r}\mathrm dl_{jk}^{(c),i}\right)\mathrm dl_{jj}^{(c),r}=\prod_{p=1}^{2j-2}(\sin\theta_{jp})^{2j-1-p}\mathrm d\theta_{jp},
\end{equation}
while in the quaternion case
\begin{equation}\label{4.5}
\left(\prod_{s=1}^{4}\prod_{k=1}^{j-1}\mathrm dl_{jk}^{(q),s}\right)\mathrm dl_{jj}^{(q),r}=\prod_{p=1}^{4j-4}(\sin\theta_{jp})^{4j-3-p}\mathrm d\theta_{jp}.
\end{equation}
On the LHS of both \eqref{4.4} and \eqref{4.5} it is implicit that the volume form is restricted to the surface of the hypersphere.

We can make use of \eqref{4.2}–\eqref{4.5}  to deduce the analogue of \eqref{3.1} in the complex and quaternion cases.

\begin{proposition}
In the complex case
\begin{equation}\label{4.6}
|J_{\{\rho_{jk}\}\mapsto\{\theta_{jk}\}}^{(c)}|=\prod_{j=2}^{N}\prod_{p=1}^{2j-2}\left(\sin\theta_{jp}\right)^{2N-p-1}
\end{equation}
while in the quaternion case
\begin{equation}\label{4.7}
|J_{\{\rho_{jk}\}\mapsto\{\theta_{jk}\}}^{(q)}|=\prod_{j=2}^{N}\prod_{p=1}^{4j-4}\left(\sin\theta_{jp}\right)^{4N-p-3}
\end{equation}
\end{proposition}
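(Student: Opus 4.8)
The plan is to reduce both \eqref{4.6} and \eqref{4.7} to the sphere Jacobians \eqref{4.4} and \eqref{4.5} by factoring the change of variables through the Cholesky factor. Write $d=2$ in the complex case and $d=4$ in the quaternion case for the number of real parameters carried by each entry. The map $\{\theta_{jk}\}\mapsto\{\rho_{jk}\}$ factors as $\{\theta_{jk}\}\mapsto\{l_{jk}^{(F)}\}\mapsto\{\rho_{jk}\}$, where the first arrow is the row-by-row hyperspherical parametrisation \eqref{4.2}, \eqref{4.3} of the strictly sub-diagonal real components of $L$ (with $l_{jj}^{(F)}$ determined by the unit-norm constraint), and the second is the Cholesky relation $R=LL^{*}$ with $L^{*}$ the conjugate transpose. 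Since distinct rows use disjoint angles and disjoint sub-diagonal components, the Jacobian of the first arrow is the product over $j=2,\dots,N$ of the right-hand sides of \eqref{4.4} and \eqref{4.5}. Hence by the chain rule the one new ingredient is the Jacobian of $L\mapsto R$ expressed through the free real components.

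For that Jacobian I would start from the entrywise form of $R=LL^{*}$: for $j>k$,
\begin{equation*}
\rho_{jk}=\sum_{m=1}^{k}l_{jm}^{(F)}\,\overline{l_{km}^{(F)}}=l_{jk}^{(F)}\,l_{kk}^{(F)}+\sum_{m=1}^{k-1}l_{jm}^{(F)}\,\overline{l_{km}^{(F)}},
\end{equation*}
the lower summation limit reflecting that $L$ is lower triangular and the last equality using that $l_{kk}^{(F)}$ is real, so $\overline{l_{kk}^{(F)}}=l_{kk}^{(F)}$. Ordering the pairs $(j,k)$, $j>k$, lexicographically by row and grouping with $\rho_{jk}$ the $d$ real components of $l_{jk}^{(F)}$, I would verify that this Jacobian matrix is block lower triangular: $\rho_{jk}$ involves only row-$j$ components with column index $\le k$ and row-$k$ components with column index $<k$, all preceding $(j,k)$ in the ordering, and its dependence on the block's own $d$ components is solely through $l_{jk}^{(F)}$. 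The diagonal $d\times d$ block is the derivative of $l_{jk}^{(F)}l_{kk}^{(F)}$ with respect to the components of $l_{jk}^{(F)}$; as $l_{kk}^{(F)}$ is a real (central) scalar this equals $l_{kk}^{(F)}I_{d}$, of determinant $(l_{kk}^{(F)})^{d}$. Multiplying over all pairs gives
\begin{equation*}
\left|\det\frac{\partial\{\rho_{jk}\}}{\partial\{l_{jk}^{(F)}\}}\right|=\prod_{1\le k<j\le N}\bigl(l_{kk}^{(F)}\bigr)^{d}=\prod_{k=1}^{N-1}\bigl(l_{kk}^{(F)}\bigr)^{d(N-k)}.
\end{equation*}

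It remains to assemble the two factors and check the exponents. Substituting the diagonal entry $l_{kk}^{(F)}=\prod_{p=1}^{d(k-1)}\sin\theta_{kp}$ read off from \eqref{4.2} and \eqref{4.3}, the factor just computed contributes the exponent $d(N-j)$ to each $\sin\theta_{jp}$ with $1\le p\le d(j-1)$, while \eqref{4.4} and \eqref{4.5} contribute $dj-(d-1)-p$ over the same range; these sum to $dN-(d-1)-p$, which is $2N-p-1$ when $d=2$ and $4N-p-3$ when $d=4$, yielding \eqref{4.6} and \eqref{4.7}. I expect the main obstacle to be the careful justification of the block lower-triangular structure, since each $\rho_{jk}$ depends on two rows of $L$ rather than one, so the lexicographic ordering must be checked to place both the row-$j$ and the row-$k$ contributions below the diagonal; a related point needing a short argument is that the conjugation and, in the quaternion case, the noncommutativity in $\overline{l_{km}^{(F)}}$ are harmless precisely because the relevant diagonal factor $l_{kk}^{(F)}$ is real.
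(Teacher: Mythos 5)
Your proof is correct, and the exponent count at the end ($d(N-j)+dj-(d-1)-p=dN-(d-1)-p$, giving $2N-p-1$ for $d=2$ and $4N-p-3$ for $d=4$) checks out; but you reach \eqref{4.6} and \eqref{4.7} by a genuinely different route in the one step that matters. You compute the Jacobian of the constrained map $\{l_{jk}^{(F)}\}_{k<j}\mapsto\{\rho_{jk}\}_{k<j}$ directly, by ordering the pairs $(j,k)$ lexicographically and observing that the matrix of partial derivatives is block lower triangular with diagonal blocks $l_{kk}^{(F)}I_d$ of determinant $(l_{kk}^{(F)})^d$ --- and the point you isolate, that $l_{kk}^{(F)}$ is a real scalar so that conjugation and quaternionic noncommutativity cannot spoil either the triangularity or the form of the diagonal block, is exactly the right one. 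This is the complex/quaternion extension of the triangularity argument of \cite{PW15,EHNS16} recalled below \eqref{3.1} in the real case. The paper instead imports the unconstrained Cholesky Jacobian \eqref{4.8} from \cite{DG11} and imposes $\rho_{jj}=1$ by inserting delta functions and integrating out the diagonal variables, which is shorter but leans on an external formula and a distributional manipulation; both proofs then finish identically with \eqref{4.4} and \eqref{4.5}. Your version is more self-contained, and it also provides a useful consistency check: the exponent $d(N-k)$ you obtain for $l_{kk}^{(F)}$ is precisely what must multiply the free-coordinate volume forms \eqref{4.4}--\eqref{4.5} to produce \eqref{4.6}--\eqref{4.7}, whereas the intermediate formula \eqref{4.9} as printed carries the exponent $\beta(N-j+1)$, which would give $\sin\theta_{jp}$ the total exponent $\beta N+1-p$ rather than $\beta N-\beta+1-p$; so your derivation in effect corrects a typo in the paper's own intermediate step while confirming the stated final result.
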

\begin{proof}
Without imposing the constraint that each diagonal entry in $R$ in \eqref{4} equals unity, but still requiring the diagonal entries of $L$ therein to be positive, the change of variables for the volume forms is specified by \cite{DG11}
\begin{equation}\label{4.8}
(\mathrm dR)=2^N\prod_{j=1}^N\left(l_{jj}^{(F),1} \right)^{\beta(N-j)+1}(\mathrm d L),
\end{equation}
where $\beta=1,2,4$ for $(F)=r,c,q$ and $l_{jj}^{(r),1} = l_{jj}^{(r)}$, $l_{jj}^{(c),1} = l_{jj}^{(c),r}$.

With $\rho_{jj}$ denoting the real diagonal entries of $R$ in \eqref{4}, we have that
\begin{equation}
\prod_{j=1}^N\delta(\rho_{jj}-1)=\prod_{j=1}^N\delta\left(\left(l_{jj}^{(F),1}\right)^2-\left(1-\sum_{s=1}^\beta\left(l_{jk}^{(F),s}\right)^2\right) \right).
\end{equation}
These distributions substituted in \eqref{4.8}, upon integrating over $\{\rho_{jj}\}$ on the LHS of \eqref{4.8}, and over $\{l_{jj}^{(F),1}\}$ on the RHS impose the constraints that $\rho_{jj}=1$, $(j=1,\cdots, N)$.

Performing the integrations gives
\begin{equation}\label{4.9}
(\mathrm dR)|_{\rho_{ij}=1}=\prod_{l=1}^N\left(l_{jj}^{(F),1}\right)^{\beta(N-j+1)}(\mathrm dL)|_{\{\ast\}}
\end{equation}
where on the RHS $\ast$ refers to the requirement that
\begin{equation}
l_{jj}^{(F),1}=\left(1-\sum_{s=1}^{\beta}\sum_{k=1}^{j-1}\left(l_{jk}^{(F,s)}\right)^2\right)^{1/2}.
\end{equation}

This latter constraint is built into the hyperspherical parametrisation. Making use then of the $j=k$ case in \eqref{4.2} and \eqref{4.3}, and changing variables in $(\mathrm dL)|_{\{\ast\}}$ by forming the product over $j=2,\cdots,N$ of \eqref{4.4} (complex case) and of \eqref{4.5} (quaternion case), we read off from the resulting forms of the RHS of \eqref{4.9} the stated Jacobians.
\end{proof}

Suppose we now impose on the space of correlation matrices with complex or quaternion entries the probability distribution with density function
\begin{equation}\label{4.10}
\frac{1}{C^{(F)}_{a,N}}\left(\det R\right)^a\ \ \ \ (a>-1),
\end{equation}
in keeping with \eqref{3.4} in the real case. As is conventional in random matrix theory, in the quaternion case $\det R$ is defined as the square root of its value with $R$ represented in terms of the complex blocks \eqref{4.1}. Hence, for $(F)=r,c,q$, we have $\det R=|\det L|^2=\prod_{l=1}^N\left(l_{jj}^{(F)} \right)^2.$ In terms of the hyperspherical parametrisation, reading from \eqref{2.3},\eqref{4.2} and \eqref{4.3} we thus have
\begin{equation}\label{4.11}
\det R=\prod_{j=2}^{ N}\prod_{p=1}^{\beta(j-1)}\sin^2\theta_{j,p},
\end{equation}
where the meaning of $\beta$ is as in \eqref{4.8}. Combining \eqref{4.11} with \eqref{4.4} and \eqref{4.5} allows the normalisation $C^{(F)}$ to be evaluated, as for the derivation of \eqref{3.6}.

\begin{proposition}
With $\beta$ as in \eqref{4.8}, the normalisation \eqref{4.10} has the explicit form
\begin{align}\label{4.12}
C_{a,N}^{(F)}&=\prod_{k=1}^{N-1}\prod_{s=0}^{\beta-1}\left(B\left(a+\frac{\beta k+1-s}{2}, {1 \over 2}\right)\right)^k\nonumber\\
&=\prod_{k=1}^{N-1}\left(\frac{\pi^{\beta/2}\Gamma\left(a+\frac{\beta(k-1)}{2}+1\right)}{\Gamma\left(a+\frac{\beta k}{2}+1\right)}\right)^k\nonumber\nonumber\\
&=\frac{\pi^{\beta(N-1)N/4}}{\left(\Gamma(a+\frac{\beta}{2}(N-1)+1)\right)^{N-1}}\prod_{k=1}^{N-1}\Gamma\left(a+\frac{\beta}{2}(k-1)+1\right)
\end{align}
\end{proposition}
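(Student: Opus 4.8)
The plan is to follow the derivation of the real-case normalisation \eqref{3.6}, exploiting the fact that both $\det R$ and the Jacobian factorise completely over the angular variables, so that the defining integral $C_{a,N}^{(F)}=\int(\det R)^a\,(\mathrm dR)$ separates into a product of one-dimensional integrals. First I would change variables from the (real components of the) off-diagonal entries of $R$ to the hyperspherical angles $\{\theta_{jp}\}$, using $(\mathrm dR)=|J^{(F)}_{\{\rho_{jk}\}\mapsto\{\theta_{jk}\}}|\prod_{j,p}\mathrm d\theta_{jp}$ with the Jacobian supplied by \eqref{4.6} (complex) and \eqref{4.7} (quaternion). In the uniform notation $\beta=2,4$ this Jacobian assigns to $\sin\theta_{jp}$ the exponent $\beta N-p-(\beta-1)$, while $(\det R)^a$, read off from \eqref{4.11}, contributes the further factor $(\sin\theta_{jp})^{2a}$. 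Hence
\[
C_{a,N}^{(F)}=\prod_{j=2}^{N}\prod_{p=1}^{\beta(j-1)}\int_0^\pi(\sin\theta)^{2a+\beta N-p-(\beta-1)}\,\mathrm d\theta ,
\]
each factor being an instance of the trigonometric beta function.

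The next step is to re-index the inner product so as to expose the block structure inherited from \eqref{4.2}--\eqref{4.3}, writing $p=\beta(k-1)+s+1$ with $1\le k\le j-1$ and $0\le s\le\beta-1$. A one-line computation shows the sine exponent collapses to $2a+\beta(N-k)-s$, so each factor equals $B\!\left(a+\tfrac{\beta(N-k)+1-s}{2},\tfrac12\right)$. Because this depends only on $k$ and $s$ and not on $j$, and for fixed $k$ the index $j$ runs over the $N-k$ values $k+1,\dots,N$, the product over $j$ simply raises each factor to the power $N-k$. The reflection $k\mapsto N-k$ then produces the first line of \eqref{4.12}.

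The remaining two equalities are Gamma-function bookkeeping. For the second line I would substitute $B(x,\tfrac12)=\sqrt\pi\,\Gamma(x)/\Gamma(x+\tfrac12)$ and observe that the product over $s=0,\dots,\beta-1$ telescopes: the denominator $\Gamma\!\left(a+\tfrac{\beta k+2-s}{2}\right)$ at index $s$ equals the numerator at index $s-1$, so only the extreme terms $\Gamma\!\left(a+\tfrac{\beta(k-1)}{2}+1\right)$ and $\Gamma\!\left(a+\tfrac{\beta k}{2}+1\right)$ survive, together with the factor $\pi^{\beta/2}$. For the third line I would write $g(m)=\Gamma\!\left(a+\tfrac{\beta m}{2}+1\right)$ and track, in $\prod_{k=1}^{N-1}\bigl(g(k-1)/g(k)\bigr)^{k}$, the total exponent attached to each $g(m)$: it is $+1$ for $0\le m\le N-2$ and $-(N-1)$ for $m=N-1$, which yields the stated product of Gamma functions; the accompanying power of $\pi$ follows from $\sum_{k=1}^{N-1}k\,\tfrac\beta2=\tfrac{\beta N(N-1)}{4}$.

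No step presents a genuine conceptual difficulty: the decisive point is that the full integrand separates into independent single-angle integrals, which is guaranteed by the product forms \eqref{4.11} and \eqref{4.6}--\eqref{4.7}. The only thing requiring care is the index management---above all the reindexing $p=\beta(k-1)+s+1$ and the subsequent reflection $k\mapsto N-k$, and keeping the two telescoping products straight---so I expect this bookkeeping to be the main (though purely mechanical) obstacle.
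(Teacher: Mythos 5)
Your proposal is correct and follows exactly the route the paper intends (the paper itself only sketches it in the sentence preceding the proposition): combine the factorised $\det R$ from \eqref{4.11} with the Jacobians \eqref{4.6}--\eqref{4.7} so the integral separates into single-angle beta integrals, then reindex and telescope the Gamma functions. Your exponent bookkeeping ($2a+\beta(N-k)-s$ after setting $p=\beta(k-1)+s+1$, followed by the reflection $k\mapsto N-k$) checks out and reproduces all three lines of \eqref{4.12}, reducing to \eqref{3.6} when $\beta=1$.
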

\begin{remark}
	(1) Let $\begin{bmatrix}
r_{jk}^{(F)}
	\end{bmatrix}_{j,k=1}^N$ be a Hermitian matrix with real, complex or quaternion entries for $F=r,c,q$ respectively. Suppose furthermore that all diagonal entries are equal to unity. As emphasized in \cite{EHNS16} in the real case, $1/C^{F}_{0,N}$ is equal to the probability that when the real and imaginary parts (the latter for $F\ne r$) of the strictly upper triangular entries are chosen uniformly at random from $(-1,1)$, the matrix is positive definite and thus a correlation matrix.
	
	(2) It has been commented in the paragraph containing \eqref{3.6} that the probability distribution \eqref{4.10} in the real case can be realised for $a=(n-N-1)/2$ by choosing $Y$ in \eqref{3} to be a standard Gaussian matrix of size $n\times N$. In \cite[Exercises 3.3 q.3]{Fo10} this realisation is extended to all three cases with $a=(\beta/2)(n-N+1-2/\beta)$, $\beta$ as in \eqref{4.8}, and $Y$ a $n\times N$ standard Gaussian matrix with entries from $F$.
\end{remark}

\medskip
Combining \eqref{4.11} with \eqref{4.4} and \eqref{4.5} also allows the complex and quaternion analogues of (\ref{3.8}) and (\ref{3.9}) to
be obtained.

\begin{proposition}
Let $\beta = 1,2,4$ for $F = r,c, q$ respectively, and consider the situation that the random correlation matrix with elements from $F$ is chosen
according to the probability distribution with density (\ref{4.10}). We have that the marginal distribution of the real part of any single non-diagonal
element of $R$ has probability density function
$$
{1 \over B(2a+\beta(N-1))} (1 - \rho^2)^{2a + \beta (N-1) - 1}, \qquad | \rho| < 1.
$$
We also have
$$
\mathbb E  ( \det R)^s = 
\prod_{k=1}^{N-1}\prod_{l =0}^{\beta-1}\left({ B\left(a+s+\frac{\beta k+1-l}{2}, {1 \over 2}\right) 
\over B\left(a+\frac{\beta k+1-l}{2}, {1 \over 2}\right)}
\right)^k
$$
\end{proposition}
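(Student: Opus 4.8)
The plan is to prove the two assertions by the route already used for \eqref{3.8} and \eqref{3.9} in the real case: pass to the hyperspherical coordinates, exploit the product form of the angular measure, and reduce every one-dimensional integral to the trigonometric form of the beta function. The two inputs are the determinant factorisation \eqref{4.11} and the Jacobians \eqref{4.6}, \eqref{4.7}, which together present the joint law of the angles $\{\theta_{j,p}\}$ in closed product form; each claim is then read off from this product.

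I would dispose of the moment formula first, since it is immediate. Under the density \eqref{4.10} we have $\mathbb{E}(\det R)^{s}=\frac{1}{C^{(F)}_{a,N}}\int(\det R)^{a+s}(\mathrm dR)$, and the remaining integral is, by its very definition, the normalisation $C^{(F)}_{a+s,N}$. Hence $\mathbb{E}(\det R)^{s}=C^{(F)}_{a+s,N}/C^{(F)}_{a,N}$, and substituting the first line of \eqref{4.12} and cancelling the common beta factors gives precisely the stated double product over $k$ and $l$. The only subsidiary point is the range of validity: the shifted integral converges exactly when $a+s>-1$ (equivalently, when the first argument of every beta factor is positive).

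For the marginal law I would first reduce to a single convenient entry. Since $\det R$ is invariant under $R\mapsto PRP^{\top}$ for any permutation matrix $P$, and this map preserves both the class of correlation matrices with entries in $F$ and the flat measure $(\mathrm dR)$, the distribution \eqref{4.10} is invariant under simultaneous permutations of rows and columns; all non-diagonal entries are therefore identically distributed, and it suffices to treat the first-column entry $\rho_{j1}$. From $R=LL^{\dagger}$ and $l_{11}=1$ we have $\rho_{j1}=l_{j1}$, and the $k=1$ case of \eqref{4.2} (respectively \eqref{4.3}) shows that the scalar (real) part of $\rho_{j1}$ equals $\cos\theta_{j,1}$ and involves no other angle. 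Combining \eqref{4.11} with \eqref{4.6} (respectively \eqref{4.7}) then displays the angular measure as a product in which the factor attached to $\theta_{j,1}$ is $(\sin\theta_{j,1})^{2a+\beta(N-1)}$; because the measure factorises, integrating out all other angles yields only constants, so this is the marginal of $\theta_{j,1}$ up to normalisation. The substitution $\rho=\cos\theta_{j,1}$, for which $\sin\theta_{j,1}=(1-\rho^{2})^{1/2}$ and $|\mathrm d\theta/\mathrm d\rho|=(1-\rho^{2})^{-1/2}$, then turns this weight into one proportional to $(1-\rho^{2})^{(2a+\beta(N-1)-1)/2}$ on $|\rho|<1$, the normalising constant being supplied by the trigonometric beta integral.

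The step that requires the most care, and the one genuinely new relative to the real case, is the multi-component bookkeeping. Each off-diagonal entry now carries $\beta$ real coordinates, governed by the block $\theta_{j,1},\dots,\theta_{j,\beta}$, of which only $\theta_{j,1}$ controls the scalar part; one must verify that the product structure of the combined measure genuinely decouples $\theta_{j,1}$ from its companion angles, so that marginalising over the latter is harmless, and that the general exponent of $\sin\theta_{j,p}$ produced by \eqref{4.6} and \eqref{4.7} reduces at $p=1$ to $2a+\beta(N-1)$. Once these are confirmed, the remaining manipulations are exactly the beta-integral evaluations already carried out for \eqref{3.8} and \eqref{3.9}.
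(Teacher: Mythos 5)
Your route is the paper's own: the paper gives no proof of this proposition beyond the remark that it follows by combining \eqref{4.11} with \eqref{4.4} and \eqref{4.5}, and both halves of your argument --- the identification $\mathbb E(\det R)^s=C^{(F)}_{a+s,N}/C^{(F)}_{a,N}$ read off against the first line of \eqref{4.12}, and the reduction of the marginal to the single angle $\theta_{j,1}$ carrying the weight $(\sin\theta_{j,1})^{2a+\beta(N-1)}$ in the factorised angular measure --- are exactly the intended computation. The moment formula, and your observation that it holds for $a+s>-1$, are correct.

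There is, however, a point you should not have passed over silently: the exponent you derive for the marginal, namely $(2a+\beta(N-1)-1)/2$, is \emph{not} the exponent $2a+\beta(N-1)-1$ asserted in the proposition, and your proof as written therefore establishes a different formula from the one claimed. Your value is the correct one. A check: for $N=2$, $a=0$, $\beta=2$ the distribution is uniform on the disc $|\rho_{21}|<1$, whose real part has density proportional to $(1-\rho^2)^{1/2}$, in agreement with $(2a+\beta(N-1)-1)/2=1/2$ and not with $2a+\beta(N-1)-1=1$; likewise for $N=3$, $a=0$, $\beta=1$ the uniform elliptope gives a marginal proportional to $(1-\rho^2)^{1/2}$, agreeing with your exponent but not with the exponent $2(a-1)+N=1$ of \eqref{3.8}, to which the present statement reduces at $\beta=1$. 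So the proposition (and its real antecedent \eqref{3.8}) carries a factor-of-two slip in the exponent, and the normalising constant --- which in the statement has also lost the second argument of the beta function --- should read $B\bigl(a+\tfrac{\beta(N-1)+1}{2},\tfrac12\bigr)$, as your trigonometric beta integral in fact produces. A complete write-up must flag this discrepancy explicitly rather than leave the derived and asserted exponents standing side by side.
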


In the real case it was shown that the hyperspherical parametrisation (\ref{2.3}) of the Cholesky factorisation (\ref{4}) implies the
simple parametrisation (\ref{2.8}) of the partial correlations $\{ \rho_{jk| \{ 1,\dots, k-1\}} \}_{1 \le k < j \le N}$.
Partial correlations are also well defined in the complex and quaternion cases through the formula (\ref{2.6}).
Note that in the quaternion case this quantity is a $2 \times 2$ matrix with structure (\ref{4.1}). Defining the complex and quaternion
analogues of (\ref{2.9}) and (\ref{2.11}), we see that the working leading to (\ref{2.20}) again holds true, with
$A_{jk}^{(F)} = r_j l_{jk}^{(F)}$. We can thus write down from (\ref{4.2}) and (\ref{4.3}) the corresponding analogues of (\ref{2.8}).

\begin{proposition}
In the setting specified above, and with $ 1 \le k < j \le N$ we have
$$
\rho_{jk|\{1,\cdots,k-1\}}^{(c)}=\cos\theta_{j,2k-1}  + i \cos \theta_{j,2k} \sin \theta_{j,2k-1}
$$
and
$$
\rho_{jk|\{1,\cdots,k-1\}}^{(q)}= \begin{bmatrix} z_{jk}^{(q)} & w_{jk}^{(q)} \\
- \bar{w}_{jk}^{(q)} & \bar{z}_{jk}^{(q)} \end{bmatrix}
$$
with
\begin{align*}
z_{jk}^{(q)} & = \cos \theta_{j, 4k - 3} + i \cos \theta_{j,4k-2} \sin \theta_{j, 4k - 3} \\
w_{j,k}^{(q)} & = \Big ( \cos \theta_{j, 4k-1} + i \cos \theta_{j,4k} \sin \theta_{j,4k-1} \Big )
 \sin \theta_{j, 4k - 3}  \sin \theta_{j, 4k - 2}.
 \end{align*}
 \end{proposition}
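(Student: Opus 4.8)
The plan is to derive the complex and quaternion formulas by establishing the analogue of \eqref{2.20} in each case and then substituting the explicit hyperspherical coordinates \eqref{4.2} and \eqref{4.3}. As indicated in the text preceding the statement, the block-Cholesky argument of Section~\ref{S2} transcribes verbatim once $\Sigma = AA^\top$ is replaced by the Hermitian form $\Sigma^{(F)} = A^{(F)}(A^{(F)})^\dagger$, with $\dagger$ the conjugate transpose appropriate to $F$ (and, for $F=q$, all matrices represented through the $2\times 2$ blocks \eqref{4.1}). My first step is therefore to run this block computation and read off the entries of the Schur complement.

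Partitioning with $\mathbf p=\{1,\cdots,k-1\}$ and $\mathbf q=\{k,\cdots,N\}$, the Schur complement is $\Sigma^{(F)}/\Sigma^{(F)}_{\mathbf p}=A^{(F)}_{\mathbf q}(A^{(F)}_{\mathbf q})^\dagger$, and since $A^{(F)}$ is lower triangular the first row of $A^{(F)}_{\mathbf q}$ contributes only the \emph{real} diagonal entry $A^{(F)}_{kk}$. Arguing exactly as in \eqref{2.17a}--\eqref{2.17d} then gives the off-diagonal entry $(\Sigma^{(F)}/\Sigma^{(F)}_{\mathbf p})_{jk}=A^{(F)}_{jk}\,\overline{A^{(F)}_{kk}}=A^{(F)}_{jk}A^{(F)}_{kk}$, the pivot $(\Sigma^{(F)}/\Sigma^{(F)}_{\mathbf p})_{kk}=(A^{(F)}_{kk})^2$, and the diagonal $(\Sigma^{(F)}/\Sigma^{(F)}_{\mathbf p})_{jj}=\sum_{m=k}^{j}|A^{(F)}_{jm}|^2$. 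Inserting these into the definition \eqref{2.6}, \eqref{2.19} of the partial correlation, the real positive factors $A^{(F)}_{kk}$ cancel and I obtain the analogue of \eqref{2.20},
\[
\rho^{(F)}_{jk|\{1,\cdots,k-1\}}=\frac{A^{(F)}_{jk}}{\left(\sum_{m=k}^{j}|A^{(F)}_{jm}|^2\right)^{1/2}},\qquad 1\le k<j\le N.
\]

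Next I would substitute $A^{(F)}_{jk}=r_j l^{(F)}_{jk}$ with $l^{(F)}_{jk}$ given by \eqref{4.2} or \eqref{4.3}. Because these coordinates are, in their real components, a standard hyperspherical parametrisation of a real sphere (the nested product-of-sines structure of \eqref{4.2} and \eqref{4.3} is precisely that of such coordinates), the tail-sum identity $\sum_{m=k}^{j}|l^{(F)}_{jm}|^2=\prod_{i=1}^{\beta(k-1)}\sin^2\theta_{ji}$ applies, $\beta$ being as in \eqref{4.8}. Hence the denominator equals $r_j\prod_{i=1}^{\beta(k-1)}\sin\theta_{ji}$, while the numerator $r_j l^{(F)}_{jk}$ carries the identical real prefactor $r_j\prod_{i=1}^{\beta(k-1)}\sin\theta_{ji}$ in each of its $\beta$ real components. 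This common factor cancels, leaving $\rho^{(F)}_{jk|\{1,\cdots,k-1\}}=l^{(F)}_{jk}\big/\prod_{i=1}^{\beta(k-1)}\sin\theta_{ji}$; reading off the stripped coordinates from \eqref{4.2} and \eqref{4.3} then reproduces the stated expression for $\rho^{(c)}_{jk|\{1,\cdots,k-1\}}$ and, splitting the four real components into $z^{(q)}_{jk}$ and $w^{(q)}_{jk}$, the claimed quaternion block.

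I expect the main obstacle to be the bookkeeping in the quaternion case, where $A^{(q)}_{jk}$ does not commute. The observation that makes the argument go through cleanly is that \emph{every} normalising factor entering the partial correlation---the pivot $A^{(F)}_{kk}$, the two diagonal square roots in \eqref{2.6}, and the prefactor $\prod_i\sin\theta_{ji}$---is a real positive scalar (the diagonal entries of a Hermitian positive definite matrix, and of $A^{(F)}$, are real), so each commutes with the quaternion $A^{(q)}_{jk}$ and the cancellation is unambiguous regardless of multiplication order. The only genuinely field-dependent input is the explicit form of $l^{(F)}_{jk}$ after the common prefactor has been removed, which is immediate from \eqref{4.2} and \eqref{4.3}.
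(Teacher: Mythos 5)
Your argument is correct and follows essentially the route the paper intends: extend the block Cholesky/Schur-complement computation of Section~\ref{S2} to the Hermitian form $\Sigma=AA^\dagger$, obtain the analogue of \eqref{2.20}, and cancel the common real prefactor $r_j\prod_{i=1}^{\beta(k-1)}\sin\theta_{ji}$ via the hyperspherical tail-sum identity applied to \eqref{4.2} and \eqref{4.3}. The paper only asserts that ``the working leading to \eqref{2.20} again holds true''; you supply the details, including the correct denominator $\bigl(\sum_{m=k}^{j}|A_{jm}|^2\bigr)^{1/2}$ (which is what \eqref{2.20} should read, rather than $A_{jj}$) and the observation that every normalising factor is a positive real scalar, so quaternion non-commutativity causes no difficulty.
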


\section*{Acknowledgements}
This work is part of a research program supported by the Australian Research Council (ARC) through the ARC Centre of Excellence for Mathematical and Statistical frontiers (ACEMS). PJF also acknowledges partial support from ARC grant DP170102028, and JZ acknowledges the support of a Melbourne postgraduate award, and an ACEMS top up scholarship.

\providecommand{\bysame}{\leavevmode\hbox to3em{\hrulefill}\thinspace}
\providecommand{\MR}{\relax\ifhmode\unskip\space\fi MR }
\providecommand{\MRhref}[2]{%
	\href{http://www.ams.org/mathscinet-getitem?mr=#1}{#2}
}
\providecommand{\href}[2]{#2}

\end{document}